\begin{document}
\def\eq#1{{\rm(\ref{#1})}}
\theoremstyle{plain}
\newtheorem*{theo}{Theorem}
\newtheorem*{ack}{Acknowledgements}
\newtheorem*{pro}{Proposition}
\newtheorem*{coro}{Corollary}
\newtheorem{thm}{Theorem}[section]
\newtheorem{lem}[thm]{Lemma}
\newtheorem{prop}[thm]{Proposition}
\newtheorem{cor}[thm]{Corollary}
\theoremstyle{definition}
\newtheorem{dfn}[thm]{Definition}
\newtheorem*{rem}{Remark}
\def\coker{\mathop{\rm coker}}
\def\ind{\mathop{\rm ind}}
\def\Re{\mathop{\rm Re}}
\def\vol{\mathop{\rm vol}}
\def\Im{\mathop{\rm Im}}
\def\im{\mathop{\rm im}}
\def\Hol{{\textstyle\mathop{\rm Hol}}}
\def\C{{\mathbin{\mathbb C}}}
\def\R{{\mathbin{\mathbb R}}}
\def\N{{\mathbin{\mathbb N}}}
\def\Z{{\mathbin{\mathbb Z}}}
\def\O{{\mathbin{\mathbb O}}}
\def\L{{\mathbin{\mathcal L}}}
\def\X{{\mathbin{\mathcal X}}}
\def\al{\alpha}
\def\be{\beta}
\def\ga{\gamma}
\def\de{\delta}
\def\ep{\epsilon}
\def\io{\iota}
\def\ka{\kappa}
\def\la{\lambda}
\def\ze{\zeta}
\def\th{\theta}
\def\vt{\vartheta}
\def\vp{\varphi}
\def\si{\sigma}
\def\up{\upsilon}
\def\om{\omega}
\def\De{\Delta}
\def\Ga{\Gamma}
\def\Th{\Theta}
\def\La{\Lambda}
\def\Om{\Omega}
\def\Up{\Upsilon}
\def\sm{\setminus}
\def\na{\nabla}
\def\pd{\partial}
\def\op{\oplus}
\def\ot{\otimes}
\def\bigop{\bigoplus}
\def\iy{\infty}
\def\ra{\rightarrow}
\def\longra{\longrightarrow}
\def\dashra{\dashrightarrow}
\def\t{\times}
\def\w{\wedge}
\def\bigw{\bigwedge}
\def\d{{\rm d}}
\def\bs{\boldsymbol}
\def\ci{\circ}
\def\ti{\tilde}
\def\ov{\overline}
\def\sv{\star\vp}
\def\stvo{\star_1\vp_1}
\def\stvt{\star_2\vp_2}
\title[CoClosed $G_2$-Structures]{Diffeomorphisms of $7$-Manifolds with CoClosed $G_2$-Structure}

\author[Salur, and Todd]{Sema Salur, and A. J. Todd}

\address {Department of Mathematics, University of Rochester, Rochester, NY, 14627}
\email{salur@math.rochester.edu}

\address {Department of Mathematics, University of California - Riverside, Riverside, CA, 92521}
\email{ajtodd@math.ucr.edu}

\begin{abstract}
We introduce co$G_2$-vector fields, coRochesterian $2$-forms and coRochesterian vector fields on manifolds with a coclosed $G_2$-structure as a continuation of work from \cite{CST1}, and we show that the spaces $\X_{coG_2}$ of co$G_2$-vector fields and $\X_{coRoc}$ of coRochesterian vector fields are Lie subalgebras of the Lie algebra of vector fields with the standard Lie bracket. We also define a bracket operation on the space of coRochesterian $2$-forms $\Om^2_{coRoc}$ associated to the space of coRochesterian vector fields and prove, despite the lack of a Jacobi identity, a relationship between this bracket and so-called \emph{co$G_2$-morphisms}.
\end{abstract}

\date{}
\maketitle
\section*{Introduction}
This paper is a natural continuation of the papers \cite{CST1, CST2} wherein we considered $7$-manifolds with (closed) $G_2$-structures. This is part of a program to better understand the exceptional geometries by viewing them as analogues of symplectic geometry; moreover, this philosophy can be viewed as a continuation of the works of Brown, Fernandez and Gray \cite{Gray, BrGr, Fe1, Fe2, FeIg, FeGr} where symplectic, $G_2$ and $Spin(7)$ geometries all come from the study of cross products on manifolds. In particular, \cite{FeGr} proves that there exist $16$ subclasses of $G_2$-structures including closed $G_2$-structures, coclosed $G_2$-structures as well as $G_2$-structures which are both closed and coclosed. In our previous article, we considered analogues of Hamiltonian and symplectic vector fields on manifolds with closed $G_2$-structure which we called \emph{Rochesterian} and \emph{$G_2$}-vector fields respectively; the current article is concerned with developing these ideas in the case of a coclosed $G_2$-structure. Note that the fundamental $3$-form $\vp$ is coclosed if and only if the associated $4$-form $\star\vp$ is closed. Here $\star\vp$ is the $4$-form which is Hodge-dual to $\vp$ with respect to the Hodge star operator associated to the metric defined by $\vp$. This is in contrast to the symplectic case where the fundamental $2$-form being closed is equivalent to it being coclosed.

The main purpose of this paper is to define \emph{co$G_2$-vector fields}, \emph{coRochesterian $2$-forms} and \emph{coRochesterian vector fields} for coclosed $G_2$-structures and prove the following results:

\begin{theo}
Every coRochesterian vector field on a manifold $M$ with coclosed $G_2$-structure $\vp$ is a co$G_2$-vector field. If every closed form in $\Om^3_7(M)$ is exact, then the spaces $\X_{coRoc}(M)$ and $\X_{coG_2}(M)$ coincide.
\end{theo}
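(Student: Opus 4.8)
The plan is to treat this as the $G_2$-analogue of the classical dichotomy between Hamiltonian and symplectic vector fields, with the closed $4$-form $\sv$ playing the role of the symplectic form. Throughout I use that $X$ is a co$G_2$-vector field precisely when $\iota_X\sv$ is closed (equivalently $\mathcal{L}_X\sv=0$), and that $X$ is coRochesterian precisely when $\iota_X\sv=\d\be$ for some coRochesterian $2$-form $\be$, i.e.\ when $\iota_X\sv$ is exact. The single structural identity driving both implications is Cartan's formula combined with coclosedness: since $\d\sv=0$,
\[
\mathcal{L}_X\sv=\d(\iota_X\sv)+\iota_X(\d\sv)=\d(\iota_X\sv).
\]

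First I would prove the inclusion $\X_{coRoc}(M)\subseteq\X_{coG_2}(M)$. Let $X$ be coRochesterian, so that $\iota_X\sv=\d\be$ for some coRochesterian $2$-form $\be$. Then $\d(\iota_X\sv)=\d(\d\be)=0$, so $\iota_X\sv$ is closed and $X$ is a co$G_2$-vector field; equivalently $\mathcal{L}_X\sv=0$ by the identity above. This direction is purely formal and requires no hypothesis on $M$.

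For the converse under the stated cohomological assumption, let $X$ be a co$G_2$-vector field, so $\d(\iota_X\sv)=0$. The essential point is that $\iota_X\sv$ lives in $\Om^3_7(M)$: contracting the fundamental $4$-form $\sv$ with a vector field always lands in the $7$-dimensional irreducible component of $3$-forms, and in fact $X\mapsto\iota_X\sv$ furnishes the standard isomorphism between vector fields and $\Om^3_7(M)$. Thus $\iota_X\sv$ is a \emph{closed} form in $\Om^3_7(M)$, and the hypothesis that every such form is exact yields a $2$-form $\be$ with $\iota_X\sv=\d\be$. Since $\d\be=\iota_X\sv\in\Om^3_7(M)$, this $\be$ is a coRochesterian $2$-form and $X$ is its associated coRochesterian vector field, giving $X\in\X_{coRoc}(M)$ and hence the reverse inclusion $\X_{coG_2}(M)\subseteq\X_{coRoc}(M)$.

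I expect the main obstacle to be representation-theoretic rather than analytic: one must verify carefully that $\iota_X\sv\in\Om^3_7(M)$ and that the $2$-form $\be$ produced by the exactness hypothesis genuinely satisfies the definition of a coRochesterian $2$-form (that $\d\be$ has vanishing $\Om^3_1$ and $\Om^3_{27}$ components). Both facts should already be available from the $G_2$-decomposition of forms recalled in setting up the definitions; once they are in hand, the theorem follows formally, with no appeal to a Poincar\'e-type lemma beyond the cohomological hypothesis itself.
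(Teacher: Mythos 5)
Your proposal is correct and follows essentially the same route as the paper: the first inclusion is immediate from $\d(\iota_X\sv)=\d\d\be=0$, and the converse uses that $\iota_X\sv$ is a closed form in $\Om^3_7(M)$, hence exact by hypothesis, with the resulting primitive automatically a coRochesterian $2$-form. You simply spell out the representation-theoretic bookkeeping (that $\iota_X\sv\in\Om^3_7(M)$ and that $\d\be$ therefore lies in $\Om^3_7(M)$) which the paper leaves implicit.
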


\begin{coro}
If $H^3(M)=\{0\}$, then every co$G_2$-vector field on a manifold with coclosed $G_2$-structure is a coRochesterian vector field.
\end{coro}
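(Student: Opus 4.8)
The plan is to deduce the corollary directly from the Theorem by checking that the cohomological hypothesis $H^3(M)=\{0\}$ forces the exactness condition appearing in the Theorem's second assertion. First I would recall that $H^3(M)=\{0\}$ means precisely that every closed $3$-form on $M$ is exact. Since $\Om^3_7(M)$ is, by definition, a linear subspace of the full space $\Om^3(M)$ of $3$-forms on $M$, any $\alpha\in\Om^3_7(M)$ with $\d\alpha=0$ is in particular a closed $3$-form, and hence $\alpha=\d\beta$ for some $2$-form $\beta$. Thus the hypothesis of the Theorem, namely that every closed form in $\Om^3_7(M)$ be exact, is automatically satisfied under the stronger assumption $H^3(M)=\{0\}$.

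With that hypothesis verified, the second assertion of the Theorem applies and yields the equality $\X_{coRoc}(M)=\X_{coG_2}(M)$. In particular this equality contains the inclusion $\X_{coG_2}(M)\subseteq\X_{coRoc}(M)$, which reads out as the statement that every co$G_2$-vector field on $M$ is a coRochesterian vector field. (The reverse inclusion $\X_{coRoc}(M)\subseteq\X_{coG_2}(M)$ is already furnished unconditionally by the first assertion of the Theorem, so the two together reconstruct the full equality and, in particular, the direction we need here.) This is exactly the claim of the corollary.

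I do not expect any genuine obstacle, as the corollary is a specialization of the Theorem. The only point that deserves a moment's care is the transition from the purely topological statement $H^3(M)=\{0\}$ to the exactness condition phrased for the $G_2$-component $\Om^3_7(M)$. Here one should note that the exactness is asserted within the ambient space of all $3$-forms: the primitive $\beta$ with $\alpha=\d\beta$ need not lie in any distinguished $G_2$-irreducible summand of $\Om^2(M)$. Consequently no refined Hodge-theoretic decomposition of $\beta$ is required, and the passage from vanishing de Rham cohomology to the hypothesis of the Theorem is immediate. Once this observation is recorded, the corollary follows at once.
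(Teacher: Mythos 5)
Your proposal is correct and matches the paper's (implicit) argument: the corollary is stated as an immediate consequence of the preceding theorem, obtained exactly as you describe by noting that $H^3(M)=\{0\}$ makes every closed $3$-form exact, hence in particular every closed form in $\Om^3_7(M)$. Your added remark that the primitive $2$-form need not lie in any particular $G_2$-irreducible summand is a sensible clarification but introduces no new content beyond the paper's reasoning.
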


We next show that the spaces of co$G_2$- and coRochesterian vector fields admit the structure of Lie algebras with Lie bracket induced from the standard Lie bracket structure on the space of all vector fields and prove the following result on inclusions:

\begin{pro}
For any co$G_2$-vector fields $X_1$, $X_2$, $[X_1,X_2]$ is a coRochesterian vector field with associated coRochesterian $2$-form given by $\sv(X_2,X_1,\cdot,\cdot)$.
\end{pro}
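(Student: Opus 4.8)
The plan is to reduce the claim to a single Cartan-calculus identity and then read off membership in $\X_{coRoc}(M)$ directly from the definitions. The hypotheses in play are precisely that $\vp$ is coclosed, i.e. $\d\sv=0$, and that $X_1,X_2\in\X_{coG_2}(M)$, i.e. $\d(\iota_{X_1}\sv)=0$ and $\d(\iota_{X_2}\sv)=0$. Since the $2$-form named in the statement is the double contraction $\iota_{X_1}\iota_{X_2}\sv=\sv(X_2,X_1,\cdot,\cdot)$, everything follows once I establish
\begin{equation*}
\iota_{[X_1,X_2]}\sv=\d\bigl(\iota_{X_1}\iota_{X_2}\sv\bigr),
\end{equation*}
after which coRochesterianity is immediate, the right-hand side being manifestly exact.

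First I would apply the standard commutator identity $\iota_{[X_1,X_2]}=\L_{X_1}\ci\,\iota_{X_2}-\iota_{X_2}\ci\,\L_{X_1}$ to $\sv$, rewriting the left-hand side as $\L_{X_1}(\iota_{X_2}\sv)-\iota_{X_2}(\L_{X_1}\sv)$. Next I would expand both Lie derivatives with Cartan's formula $\L_X=\d\ci\iota_X+\iota_X\ci\d$. In the first summand the piece $\iota_{X_1}\d(\iota_{X_2}\sv)$ vanishes because $X_2$ is a co$G_2$-vector field, leaving exactly $\d(\iota_{X_1}\iota_{X_2}\sv)$. In the second summand, $\iota_{X_2}\d(\iota_{X_1}\sv)$ vanishes because $X_1$ is a co$G_2$-vector field and $\iota_{X_2}\iota_{X_1}(\d\sv)$ vanishes by coclosedness of $\vp$, so the second summand is zero altogether. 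Combining these yields the displayed identity.

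To finish I would verify that $\sv(X_2,X_1,\cdot,\cdot)$ is a genuine coRochesterian $2$-form and that the vector field it induces is $[X_1,X_2]$: its exterior derivative equals $\iota_{[X_1,X_2]}\sv$, which is the contraction of $\sv$ by a single vector field and hence lies in $\Om^3_7(M)$, so the defining condition is met and $[X_1,X_2]$ is exactly the coRochesterian vector field associated to it.

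I do not anticipate a serious obstacle: the argument is three applications of standard identities together with the two hypotheses. The only delicate point is the bookkeeping of contraction order and signs, namely confirming that $\iota_{X_1}\iota_{X_2}\sv$ equals $\sv(X_2,X_1,\cdot,\cdot)$ with the stated ordering of the arguments, and checking that the sign convention in $\iota_{[X_1,X_2]}=\L_{X_1}\ci\,\iota_{X_2}-\iota_{X_2}\ci\,\L_{X_1}$ is consistent with the one adopted elsewhere in the paper.
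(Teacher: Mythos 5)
Your proposal is correct and follows essentially the same route as the paper: both arguments rest on the identity $\iota_{[X_1,X_2]}\sv=\L_{X_1}(\iota_{X_2}\sv)-\iota_{X_2}(\L_{X_1}\sv)$ followed by Cartan's formula, using $\L_{X_1}\sv=0$ and $\d(\iota_{X_2}\sv)=0$ to isolate the exact term $\d(\iota_{X_1}\iota_{X_2}\sv)=\d(\sv(X_2,X_1,\cdot,\cdot))$. The only cosmetic difference is that you expand $\L_{X_1}\sv$ via Cartan's formula into two separately vanishing pieces, whereas the paper invokes the co$G_2$ condition $\L_{X_1}\sv=0$ directly.
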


Finally, we equip the space of coRochesterian $2$-forms with a bracket structure analogous to that of the Poisson bracket from symplectic geometry, show that it does \emph{not} satisfy the Jacobi identity, show that there is a linear transformation $\Phi$ of the \emph{vector spaces} of coRochesterian $2$-forms and coRochesterian vector fields and prove the following result regarding these structures:

\begin{theo}
\begin{enumerate}
    \item Given two coRochesterian $2$-forms $\si_1,\si_2\in\Om^2_{Roc}(M)$, $\{\si_1,\si_2\}\in\ker\Phi$ if and only if $\d\si_1$ is constant along the flow lines of $X_{\si_2}$ if and only if $\d\si_2$ is constant along the flow lines of $X_{\si_1}$.
    \item Let $\psi:(M_1,\vp_1)\to(M_2,\vp_2)$ be a diffeomorphism. Then $\psi$ is a co$G_2$-morphism if and only if $\psi^*(\{\si,\tau\})=\{\psi^*\si,\psi^*\tau\}$ for all $\si,\tau\in\Om^2_{coRoc}(M_2)$.
\end{enumerate}
\end{theo}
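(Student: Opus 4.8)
The plan is to prove the two parts of the theorem separately, working from the definitions of the bracket $\{\cdot,\cdot\}$ on coRochesterian $2$-forms, the linear map $\Phi$, and the notion of co$G_2$-morphism, all of which are established earlier. For part (1), I would begin by unwinding the statement ``$\{\si_1,\si_2\}\in\ker\Phi$'' using the explicit formula for the bracket. Since $\Phi$ is a linear transformation between the vector spaces $\Om^2_{coRoc}(M)$ and $\X_{coRoc}(M)$, and since the bracket of two coRochesterian $2$-forms should itself be (or be built from) a coRochesterian $2$-form, the condition $\{\si_1,\si_2\}\in\ker\Phi$ ought to reduce to the vanishing of a single differential expression. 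The natural guess, by analogy with the symplectic Poisson bracket $\{f,g\}=X_g f=-X_f g$, is that $\{\si_1,\si_2\}$ is defined via the action of $X_{\si_2}$ on $\d\si_1$ (equivalently $X_{\si_1}$ on $\d\si_2$), so that membership in $\ker\Phi$ is equivalent to $\mathcal{L}_{X_{\si_2}}\d\si_1=0$, i.e.\ $\d\si_1$ being constant along the flow of $X_{\si_2}$.

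Concretely, I would first identify what $\Phi$ sends a bracket to, then show the kernel condition is $\d(\{\si_1,\si_2\})=0$ or an analogous closedness/invariance statement, and finally translate this into the Lie-derivative form. The symmetry between the two ``if and only if'' clauses, namely $\d\si_1$ constant along $X_{\si_2}$ versus $\d\si_2$ constant along $X_{\si_1}$, I expect to follow from an antisymmetry or Leibniz-type identity satisfied by the bracket: differentiating the defining relation and using $\mathcal{L}_X=\d\io_X+\io_X\d$ should interchange the two roles. The main obstacle in part (1) will be that, unlike the symplectic Poisson bracket, this bracket does \emph{not} satisfy the Jacobi identity, so I cannot import the standard symplectic proof wholesale; I must track the precise failure terms and confirm they do not obstruct the equivalence, which likely rests on the specific relationship between $X_\si$ and $\d\si$ encoded by the map associating coRochesterian $2$-forms to coRochesterian vector fields.

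For part (2), I would argue in both directions. A co$G_2$-morphism $\psi\colon(M_1,\vp_1)\to(M_2,\vp_2)$ is, by definition, a diffeomorphism compatible with the coclosed $G_2$-structures, so $\psi^*(\star_2\vp_2)=\star_1\vp_1$ (pullback of the closed $4$-forms). The forward direction should follow because naturality of pullback commutes with the constructions entering the bracket: $\psi^*\d=\d\psi^*$, pullback intertwines the vector-field/$2$-form correspondence when the defining $4$-forms match, and $\psi^*\io_X=\io_{\psi_*^{-1}X}\psi^*$. Chaining these identities, $\psi^*$ applied to $\{\si,\tau\}$ should reproduce $\{\psi^*\si,\psi^*\tau\}$ term by term. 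For the converse, I would assume the bracket-compatibility $\psi^*(\{\si,\tau\})=\{\psi^*\si,\psi^*\tau\}$ holds for \emph{all} $\si,\tau\in\Om^2_{coRoc}(M_2)$ and aim to recover $\psi^*(\star_2\vp_2)=\star_1\vp_1$. The strategy is to feed sufficiently many test $2$-forms into the identity so that the arbitrariness of $\si,\tau$ forces the underlying $4$-forms (which appear inside the bracket formula, as Proposition's formula $\star\vp(X_2,X_1,\cdot,\cdot)$ suggests) to agree after pullback; local or pointwise arguments, evaluating on a spanning set of tangent vectors, should pin down $\star_1\vp_1=\psi^*(\star_2\vp_2)$.

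I expect the converse direction of part (2) to be the genuine difficulty: extracting the rigid geometric condition (equality of the $4$-forms defining the $G_2$-structures) from an identity that only constrains the bracket is the heart of the matter, and it requires showing the bracket ``sees'' enough of $\star\vp$ to be invertible in the relevant sense. The key will be choosing test forms adapted to the $\Om^2_7$/$\Om^2_{14}$ decomposition so that the bracket values separate the components of $\star\vp$, after which the equality of the full $4$-forms follows by the nondegeneracy of the $G_2$-structure. Throughout, I would lean on the earlier Proposition, which already expresses a bracket of vector fields via $\star\vp(X_2,X_1,\cdot,\cdot)$, as the bridge between the $2$-form bracket and the geometric data that a co$G_2$-morphism must preserve.
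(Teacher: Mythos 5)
Your plan follows the paper's proof essentially step for step: part (1) is exactly the computation $\{\si_1,\si_2\}=X_{\si_2}\lrcorner\d\si_1$ followed by Cartan's formula to get $\d\{\si_1,\si_2\}=\L_{X_{\si_2}}(\d\si_1)$ together with the injectivity of $X\mapsto X\lrcorner\sv$, and part (2) is the naturality-of-pullback argument in the forward direction plus, in the converse, the test-form/spanning argument the paper uses (first forcing $X_{\psi^*\tau}=\d\psi^{-1}X_{\tau}$ from the arbitrariness of $\si$, then recovering $\stvo=\psi^*(\stvt)$). Your concern about the failure of the Jacobi identity is unnecessary---it plays no role in either part---but otherwise this is the paper's approach.
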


\section{$G_2$ Geometry}
If we consider coordinates $(x_1,\ldots,x_7)$ on $\R^7$, we can define a $3$-form $\vp_0$ by $$\vp_0=\d x^{123}+\d x^{145}+\d x^{167}+\d x^{246}-\d x^{257}-\d x^{347}-\d x^{356}.$$ From this $3$-form, we get an induced metric and orientation by the formula $$(X\lrcorner\vp_0)\w(Y\lrcorner\vp_0)\w\vp_0=6<X,Y>_{\vp_0}\d vol_{\vp_0}$$ for vector fields $X,Y\in \X(\R^7)$. Then, using this metric, we can define a $2$-fold vector cross product of $X$ and $Y$ as the unique vector field $X\t Y$ satisfying $<X\t Y,Z>_{\vp_0}=\vp_0(X,Y,Z)$ for all $Z\in\X(\R^7)$. This metric then gives the associated Hodge star from which we get the dual of $\vp_0$ given by the $4$-form $$\star\vp_0=\d x^{4567}+\d x^{2367}+\d x^{2345}+\d x^{1357}-\d x^{1346}-\d x^{1256}-\d x^{1247}.$$ Here, we have started with the $3$-form and have shown how to define the other structures in terms of it; a fundamental fact of $G_2$-geometry however is that given \emph{any one} of $\vp_0$, $\star\vp_0$, $\t$ or $<,>$, we can always define the other structures. References for this information and an equivalent formulation of these structures arising from the octonions include \cite{BrGr, Br1, FeGr, Gray, HaLa, Jo, Kari}.

\begin{dfn}
A manifold $M$ is said to have a \emph{$G_2$-structure} if there is a $3$-form $\vp$ such that $(T_pM,\vp)\cong(\R^7,\vp_0)$ as vector spaces for every point $p\in M$. This is equivalent to a reduction of the tangent frame bundle from $GL(7,\R)$ to the Lie group $G_2$.
\end{dfn}

\begin{rem}
We will make frequent use of the fact that $\d^*\vp=0$ if and only if $\d\sv=0$. Also, because of the inclusion $G_2$ in $SO(7)$, all manifolds with $G_2$-structure are necessarily orientable; further, it can be shown that all manifolds with $G_2$-structure are spin, and any $7$-manifold with spin structure admits a $G_2$-structure.
\end{rem}

A natural geometric requirement is that $\vp$ be constant with respect to the Levi-Civita connection of the $G_2$-metric $g_{\vp}$ defined by $\vp$. In this case, the holonomy of $(M,\vp)$ is a subgroup of $G_2$, and $(M,\vp)$ is called a \emph{$G_2$-manifold}. The condition that $\na\vp=0$ is equivalent to $\d\vp=0$ and $\d^*\vp=0$ where $\d^*$ is the adjoint operator to the exterior derivative with respect to the Hodge star associated to the $G_2$-metric $g_{\vp}$. Fernandez and Gray \cite{FeGr} show that $G_2$-manifolds are just $1$ of $16$ types of $G_2$-structures on manifolds. Two of these classes include natural weakenings of the $G_2$-manifold requirements to manifolds with closed $G_2$-structures, $\d\vp=0$, and manifolds with coclosed $G_2$-structures, $\d^*\vp=0$.

In \cite{CST1}, we defined a \emph{$G_2$-morphism} to be a diffeomorphism $\Up:(M_1,\vp_1)\to(M_2,\vp_2)$ of manifolds with $G_2$-structures such that $\Up^*(\vp_2)=\vp_1$. Because $\d$ commutes with pullback maps, we get for free that $\d\vp_1=0$ if and only if $\d\vp_2=0$.

\begin{dfn}
Let $\Psi:(M_1,\vp_1)\to(M_2,\vp_2)$ be a diffeomorphism such that $\Psi^*(\star_2\vp_2)=\star_1\vp_1$. Then $\Psi$ is called a \emph{co$G_2$-morphism}. Again, since $\d$ commutes with pullback maps, we have $\d\stvo=0$ if and only if $\d\stvt=0$.
\end{dfn}

Let $(M_1,\vp_1)$ and $(M_2,\vp_2)$ be two $7$-dimensional manifolds with $G_2$-structures. Let $M_1\t M_2$ be the standard Cartesian product of $M_1$ and $M_2$ with canonical projection maps $\pi_i:M_1\t M_2\to M_i$. Define a $4$-form $\sv=\pi_1^*(\stvo)+\pi_2^*(\stvt)$. If both $\stvo$ and $\stvt$ are closed, then this form is also closed; in fact, for any $a_1,a_2\in\R$, $a_1\pi_1^*(\stvo)+a_2\pi_2^*(stvt)$ defines a (closed) $4$-form on $M_1\t M_2$. Taking $a_1=1$ and $a_2=-1$, we have the (closed) $4$-form $\widetilde{\sv}=\pi_1^*(\stvo)-\pi_2^*(\stvt)$.

\begin{thm}
A diffeomorphism $\Psi:(M_1,\vp_1)\to(M_2,\vp_2)$ is a co$G_2$-morphism if and only if $\widetilde{\sv}|_{\Ga_{\Psi}}\equiv 0$, where $\Ga_{\Psi}:=\{(p,\Psi(p))\in M_1\t M_2:p\in M_1\}$.
\end{thm}

\begin{proof}
The submanifold $\Ga_{\Psi}$ is the embedded image of $M_1$ in $M_1\t M_2$ with embedding given by $\tilde{\Psi}:M_1\to M_1\t M_2$ $\tilde{\Psi}(p)=(p,\Psi(p))$. Then $\widetilde{\sv}|_{\Ga_{\Psi}}=0$ if and only if $$0=\tilde{\Psi}^*(\widetilde{\sv})= \tilde{\Psi}^*\pi_1^*(stvo)-\tilde{\Psi}^*\pi_2^*(\stvt)$$ $$=(\pi_1\circ\tilde{\Psi})^*(\stvo)-(\pi_2\circ\tilde{\Psi})^*(\stvt)=(id_{M_1})^*(\stvo)-\Psi^*(\stvt)=\stvo-\Psi^*(\stvt).$$
\end{proof}

\section{Co$G_2$ Vector Fields, CoRochesterian $2$-Forms and CoRochesterian Vector Fields}
Let $M$ be a $7$-manifold with a $G_2$-structure. Recall that there is an action of the Lie group $G_2$ on the algebra of differential forms on $M$ from which we obtain decompositions of each space of $k$-forms on $M$ into irreducible $G_2$-representations. In particular, we can decompose the space of $3$-forms into the direct sum of a one-dimensional representation, a seven-dimensional representation and a $27$-dimensional representation, denoted from here on by $\Om^3_1$, $\Om^3_{7}$ and $\Om^3_{27}$ respectively; it is well known that $\Om^3_{7}=\{X\lrcorner\sv:X\in\X(M)\}$ where $\X(M)$ is the space of vector fields on $M$ (see for example \cite{FeGr, Jo, Kari, Sa}).

\begin{dfn}
Let $(M,\vp)$ be a manifold with coclosed $G_2$-structure.
\begin{enumerate}
    \item We define a \emph{coRochesterian $2$-form} $\si$ to be any $2$-form on $M$ such that $\d\si\in\Om^3_7(M)$. We denote the set of coRochesterian $2$-forms on $M$ by $\Om^2_{coRoc}(M)$.
    \item For a coRochesterian $2$-form $\si$, the vector field $X_{\si}$ satisfying $X_{\si}\lrcorner\sv=\d\si$ will be called a \emph{coRochesterian vector field}, and the set of coRochesterian vector fields on $M$ will be denoted by $\X_{coRoc}(M)$.
    \item A vector field $X$ is called a \emph{co$G_2$-vector field} if $\L_X(\sv)=0$. $\X_{coG_2}(M)$ will denote the set of all co$G_2$-vector fields.
\end{enumerate}
\end{dfn}

That $\X_{coG_2}$, $\Om^2_{coRoc}$ and $\X_{coRoc}$ are vector spaces follows immediately from the linearity properties of $\d$ and the interior product. We have the standard fact that $X$ is a co$G_2$-vector field if and only if $\d(X\lrcorner\sv)=0$ since $\d\sv=0$ implies that $\L_X(\sv)=\d(X\lrcorner\sv)+X\lrcorner\d\sv=\d(X\lrcorner\sv)$. Here, as in the case of closed $G_2$-structures, the map $\widetilde{\sv}:\X(M)\to\Om^3(M)$ given by $\widetilde{\sv}(X)=X\lrcorner\sv$ cannot be an isomorphism; however, by the nondegeneracy condition on the $4$-form $\sv$, we do have that $\widetilde{\sv}$ is injective, so for a given coRochesterian $2$-form $\si$, the associated coRochesterian vector field $X_{\si}$ is unique.

\begin{thm}
Every coRochesterian vector field on a manifold $M$ with coclosed $G_2$-structure $\vp$ is a co$G_2$-vector field. If every closed form in $\Om^3_7(M)$ is exact, then the spaces $\X_{coRoc}(M)$ and $\X_{coG_2}(M)$ coincide.
\end{thm}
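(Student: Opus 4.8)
The plan is to treat the two assertions in sequence, the first being essentially immediate and the reverse inclusion in the second being the only place where the hypotheses genuinely enter.

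For the first assertion, I would start from the defining property of a coRochesterian vector field $X_\si$, namely $X_\si\lrcorner\sv=\d\si$ for some coRochesterian $2$-form $\si$. Since the right-hand side is exact, applying $\d$ and using $\d^2=0$ yields $\d(X_\si\lrcorner\sv)=\d(\d\si)=0$. By the characterization recorded just before the theorem---that $X$ is a co$G_2$-vector field if and only if $\d(X\lrcorner\sv)=0$, valid because $\sv$ is closed---this is precisely the statement that $X_\si\in\X_{coG_2}(M)$. Thus $\X_{coRoc}(M)\subseteq\X_{coG_2}(M)$ with no extra hypotheses.

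For the second assertion, the containment $\X_{coRoc}(M)\subseteq\X_{coG_2}(M)$ is the first part, so it remains only to prove the reverse under the stated cohomological hypothesis. I would take an arbitrary $X\in\X_{coG_2}(M)$ and observe two things: first, $X\lrcorner\sv$ lies in $\Om^3_7(M)$, since $\Om^3_7=\{X\lrcorner\sv:X\in\X(M)\}$; and second, $X\lrcorner\sv$ is closed, by the co$G_2$-condition. Hence $X\lrcorner\sv$ is a closed form in $\Om^3_7(M)$, and the hypothesis that every such form is exact produces a $2$-form $\si$ with $\d\si=X\lrcorner\sv$.

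The final step is to recognize that this $\si$ is coRochesterian and that its associated coRochesterian vector field is $X$ itself. Indeed $\d\si=X\lrcorner\sv\in\Om^3_7(M)$, so $\si\in\Om^2_{coRoc}(M)$ by definition; and the defining equation $X_\si\lrcorner\sv=\d\si=X\lrcorner\sv$ forces $X_\si=X$ by the injectivity of the map $\widetilde{\sv}$. Therefore $X\in\X_{coRoc}(M)$, giving $\X_{coG_2}(M)\subseteq\X_{coRoc}(M)$ and hence equality. The only essential input is the exactness hypothesis, which is exactly what lets us invert the otherwise non-surjective correspondence between $2$-forms and vector fields; I expect no genuine obstacle beyond carefully tracking that the $2$-form supplied by exactness is indeed coRochesterian and that it is \emph{injectivity}, not surjectivity, of $\widetilde{\sv}$ that closes the argument.
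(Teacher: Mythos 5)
Your proposal is correct and follows exactly the paper's argument: the first inclusion is immediate from $\d^2=0$ together with the characterization $\d(X\lrcorner\sv)=0$ of co$G_2$-vector fields, and the reverse inclusion uses that $X\lrcorner\sv$ is a closed form in $\Om^3_7(M)$, hence exact by hypothesis. Your additional remarks on verifying that the resulting $2$-form is coRochesterian and invoking the injectivity of $\widetilde{\sv}$ simply make explicit details the paper leaves to the reader.
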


\begin{proof}
The first statement follows immediately from the definitions. Next, for a co$G_2$-vector field $X$, $X\lrcorner\sv\in\Om^3_7(M)$ is closed, so, by assumption, there exists a $2$-form $\si$ with $X\lrcorner\sv=\d\si$.
\end{proof}

\begin{cor}
If $H^3(M)=\{0\}$, then every co$G_2$-vector field on a manifold with coclosed $G_2$-structure is a coRochesterian vector field.
\end{cor}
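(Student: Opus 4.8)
The plan is to derive this corollary as an immediate consequence of the second statement of the preceding theorem, by checking that the hypothesis ``every closed form in $\Om^3_7(M)$ is exact'' is automatically satisfied once $H^3(M)=\{0\}$. First I would recall that $H^3(M)=\{0\}$ means precisely that every closed $3$-form on $M$ is exact. Since $\Om^3_7(M)$ is a linear subspace of the full space $\Om^3(M)$ of $3$-forms, any closed element of $\Om^3_7(M)$ is in particular a closed $3$-form on $M$, hence exact. Thus the cohomological hypothesis of the theorem holds, and the theorem yields $\X_{coRoc}(M)=\X_{coG_2}(M)$; the corollary is then just the inclusion $\X_{coG_2}(M)\subseteq\X_{coRoc}(M)$.

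For completeness I would also spell out the argument at the level of a single vector field, so as to exhibit the coRochesterian $2$-form explicitly. Given a co$G_2$-vector field $X$, we have $X\lrcorner\sv\in\Om^3_7(M)$ by the description $\Om^3_7=\{X\lrcorner\sv:X\in\X(M)\}$, and $\d(X\lrcorner\sv)=0$ since $X$ is co$G_2$. By $H^3(M)=\{0\}$ there is a $2$-form $\si$ with $\d\si=X\lrcorner\sv$. The one point worth verifying is that this $\si$ is genuinely coRochesterian, i.e.\ that $\d\si\in\Om^3_7(M)$; but this is immediate, since $\d\si=X\lrcorner\sv$ already lies in $\Om^3_7(M)$ by construction. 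Hence $X=X_\si$ is coRochesterian.

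There is essentially no obstacle here: the only substantive observation is that a de Rham primitive $\si$ of a form already known to lie in $\Om^3_7(M)$ is automatically a coRochesterian $2$-form, which follows directly from the definition since $\d\si$ equals the given $\Om^3_7$-form. The result is therefore a formal corollary of the theorem together with the identification of $H^3(M)=\{0\}$ with the exactness of every closed $3$-form.
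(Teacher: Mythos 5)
Your proof is correct and follows the same route the paper intends: the corollary is deduced from the preceding theorem by observing that $H^3(M)=\{0\}$ forces every closed form in $\Om^3_7(M)$ to be exact, and your extra remark that a primitive $\si$ of $X\lrcorner\sv$ is automatically coRochesterian is exactly the point already made in the theorem's proof. No issues.
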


\begin{prop}
For any co$G_2$-vector fields $X_1$, $X_2$, there exists a $2$-form $\si$ such that $[X_1,X_2]\lrcorner\sv=\d\si$.
\end{prop}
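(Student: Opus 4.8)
The plan is to exhibit the $2$-form $\si$ explicitly rather than merely argue for its existence, recovering the formula $\si=\sv(X_2,X_1,\cdot,\cdot)$ advertised in the Proposition of the introduction. The entire argument is a short exercise in Cartan calculus combined with the defining condition $\L_{X_i}(\sv)=0$ (equivalently $\d(X_i\lrcorner\sv)=0$) for co$G_2$-vector fields.

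First I would invoke the standard commutator identity relating contraction along a Lie bracket to the Lie derivative, which for any form $\al$ reads
$$[X_1,X_2]\lrcorner\al=\L_{X_1}(X_2\lrcorner\al)-X_2\lrcorner(\L_{X_1}\al).$$
Taking $\al=\sv$ and using that $X_1$ is a co$G_2$-vector field, so $\L_{X_1}\sv=0$, the second term drops out and we are left with $[X_1,X_2]\lrcorner\sv=\L_{X_1}(X_2\lrcorner\sv)$.

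Next I would expand this Lie derivative with Cartan's magic formula $\L_{X_1}\be=\d(X_1\lrcorner\be)+X_1\lrcorner(\d\be)$ applied to $\be=X_2\lrcorner\sv$, which gives
$$[X_1,X_2]\lrcorner\sv=\d(X_1\lrcorner X_2\lrcorner\sv)+X_1\lrcorner\,\d(X_2\lrcorner\sv).$$
Here the co$G_2$ condition on $X_2$ enters: as already observed in the text, $X_2$ being co$G_2$ is equivalent to $\d(X_2\lrcorner\sv)=0$, which annihilates the second term. Therefore $[X_1,X_2]\lrcorner\sv=\d(X_1\lrcorner X_2\lrcorner\sv)$, and setting $\si:=X_1\lrcorner X_2\lrcorner\sv$, which is precisely the $2$-form $\sv(X_2,X_1,\cdot,\cdot)$, completes the proof.

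The computation itself presents no serious obstacle once the two identities are in hand; the points deserving care are purely in bookkeeping the signs and slot-orderings in the commutator identity, and in the identification $X_1\lrcorner X_2\lrcorner\sv=\sv(X_2,X_1,\cdot,\cdot)$, where one must note the reversal of the order of $X_1$ and $X_2$. What is worth emphasizing, however, is that the \emph{explicitness} of the primitive is essential. One could argue far more cheaply, via $\L_{[X_1,X_2]}=\L_{X_1}\L_{X_2}-\L_{X_2}\L_{X_1}$, that $[X_1,X_2]$ is again a co$G_2$-vector field; but co$G_2$-ness alone does not yield exactness of $[X_1,X_2]\lrcorner\sv$ without the cohomological hypothesis of the Theorem. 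It is precisely the closed form $X_1\lrcorner X_2\lrcorner\sv$ produced above that upgrades $[X_1,X_2]$ to a genuine coRochesterian vector field unconditionally, which is the content the statement is really after.
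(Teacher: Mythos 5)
Your argument is correct and is essentially identical to the paper's proof: the same commutator identity $[X_1,X_2]\lrcorner\sv=\L_{X_1}(X_2\lrcorner\sv)-X_2\lrcorner(\L_{X_1}\sv)$, followed by Cartan's formula and the vanishing of $\d(X_2\lrcorner\sv)$, yielding the same explicit primitive $\si=\sv(X_2,X_1,\cdot,\cdot)$. Your closing remark correctly identifies why the explicit primitive matters, namely that it gives coRochesterian-ness of $[X_1,X_2]$ without any cohomological hypothesis.
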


\begin{proof}
\begin{equation*}
\begin{split}
[X_1,X_2]\lrcorner\sv&=\L_{X_1}(X_2\lrcorner\sv)-X_2\lrcorner(\underbrace{\L_{X_1}(\sv)}_{=0}) \\
&=\L_{X_1}(X_2\lrcorner\sv)=\d(X_1\lrcorner X_2\lrcorner\sv)+X_1\lrcorner(\underbrace{\d(X_2\lrcorner\sv)}_{=0})\\
&=\d(\sv(X_2,X_1,\cdot))
\end{split}
\end{equation*}
Thus, $[X_1,X_2]$ is a coRochesterian vector field with an associated $2$-form given by $\sv(X_2,X_1,\cdot,\cdot)$.
\end{proof}
Thus, we have the following inclusions of \emph{Lie algebras}: $$(\X_{coRoc}(M),[\cdot,\cdot])\subseteq(\X_{coG_2}(M),[\cdot,\cdot])\subseteq(\X(M),[\cdot,\cdot]).$$

For a coRochesterian $2$-form $\si$, the assignment $\si\mapsto X_{\si}$ where $X_{\si}$ is the unique associated coRochesterian vector field is linear. We now equip $\Om^2_{coRoc}(M)$ with a bracket as follows: for $\si,\tau\in\Om^2_{coRoc}(M)$, define $\{\si,\tau\}=\sv(X_{\si},X_{\tau},\cdot,\cdot)$. Then $\{\si,\tau\}\in \Om^2_{coRoc}(M)$ with coRochesterian vector field given by $[X_{\tau},X_{\si}]$ since $$\d(\{\si,\tau\})=\d(\sv(X_{\si},X_{\tau},\cdot,\cdot))=[X_{\tau},X_{\si}]\lrcorner\sv.$$

\begin{rem}
This bracket is again a specific case of the \emph{semibracket} defined in \cite{BHR} for the general multisymplectic setting, and a proof of the following result in this more general setting can be found in \cite[Proposition 3.7]{BHR}.
\end{rem}

\begin{prop}
For any $\si,\tau,\up\in\Om^2_{coRoc}(M)$,
$$\{\si,\{\tau,\up\}\}+\{\tau,\{\up,\si\}\}+\{\up,\{\si,\tau\}\}=\d(X_{\si}\lrcorner X_{\tau}\lrcorner\d\up)$$
\end{prop}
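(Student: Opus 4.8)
The plan is to compute the Jacobiator on the left-hand side directly by unwinding the definitions of the bracket and the coRochesterian vector fields, converting each nested bracket into an interior-product expression involving $\star\vp$, and then using the interplay of Lie derivatives, Cartan's magic formula, and the closedness $\d(\star\vp)=0$ to collapse everything to the stated exact term. First I would record the two facts I will use repeatedly: by definition $\{\si,\tau\}=\sv(X_\si,X_\tau,\cdot,\cdot)=X_\tau\lrcorner X_\si\lrcorner\sv$ (up to sign conventions in the contraction order), and its differential is $\d\{\si,\tau\}=[X_\tau,X_\si]\lrcorner\sv$, which is exactly the identity established just before the statement. The key structural observation is that the associated vector field of $\{\si,\tau\}$ is $[X_\tau,X_\si]$, so iterating the bracket corresponds to iterating the Lie bracket of vector fields, and the Jacobiator of the $\{\cdot,\cdot\}$ bracket should be controlled by the Jacobi identity for vector field brackets plus a correction measuring the failure of $\si\mapsto X_\si$ to intertwine the two operations cleanly.

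Next I would expand one cyclic term, say $\{\si,\{\tau,\up\}\}$. Writing $Y=X_{\{\tau,\up\}}=[X_\up,X_\tau]$, this equals $Y\lrcorner X_\si\lrcorner\sv$, i.e. $[X_\up,X_\tau]\lrcorner X_\si\lrcorner\sv$. Doing the same for the other two cyclic terms and summing, I expect the totally antisymmetric part in the three vector fields $X_\si,X_\tau,X_\up$ to reorganize, via the Jacobi identity $[[X_\up,X_\tau],X_\si]+[[X_\si,X_\up],X_\tau]+[[X_\tau,X_\si],X_\up]=0$ for the standard Lie bracket, into something that is a total contraction of $\sv$ against a vanishing combination plus exact remainders. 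The efficient route is to use Cartan's formula $\L_X\al=\d(X\lrcorner\al)+X\lrcorner\d\al$ together with $\L_X(Y\lrcorner\al)=[X,Y]\lrcorner\al+Y\lrcorner\L_X\al$; applying these to $\al=\sv$ (so $\d\al=0$ and $\L_{X_\si}\sv=0$ since coRochesterian vector fields are co$G_2$-vector fields by the earlier theorem) lets me trade each $[\,\cdot,\cdot\,]\lrcorner\sv$ for a Lie derivative of a contraction, and iterated Cartan calculus should leave precisely the single exact term $\d(X_\si\lrcorner X_\tau\lrcorner\d\up)$.

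Concretely I would aim to show that the cyclic sum equals $\d$ of a $1$-form (or $2$-form, matching the target degree) obtained from the anticommutator of the two operations, and that after cancellation only $X_\si\lrcorner X_\tau\lrcorner\d\up$ survives inside the $\d$. The crucial inputs are that $\d\up\in\Om^3_7(M)$ so that $\d\up=X_\up\lrcorner\sv$, allowing me to rewrite $X_\si\lrcorner X_\tau\lrcorner\d\up=X_\si\lrcorner X_\tau\lrcorner X_\up\lrcorner\sv$ as a pure triple contraction of the closed $4$-form $\sv$; this symmetric-looking scalar-free $1$-form is exactly the kind of object a Jacobiator defect produces, and its appearance is forced once all the $\L_{X_\si}\sv=0$ simplifications are applied.

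I expect the main obstacle to be bookkeeping of signs and contraction orders: the bracket $\{\si,\tau\}=\sv(X_\si,X_\tau,\cdot,\cdot)$ carries a definite sign relative to $X_\tau\lrcorner X_\si\lrcorner\sv$, and each antisymmetric swap in the triple contraction flips a sign, so the three cyclic terms must be aligned into a common contraction order before the Jacobi identity can be invoked. The delicate point is ensuring that the non-exact pieces genuinely cancel in pairs (using $\d\sv=0$ and $\L_{X_\bullet}\sv=0$) rather than merely combining into another exact term, since an error in one sign would produce a spurious multiple of the answer. A clean way to manage this is to fix the convention $\sigma\lrcorner$ acting on the left throughout, expand every $\L$ via Cartan's formula symbolically, and verify term-by-term that all contributions except $\d(X_\si\lrcorner X_\tau\lrcorner\d\up)$ are annihilated by the two vanishing conditions; this reduces the proof to a finite, mechanical — if tedious — verification.
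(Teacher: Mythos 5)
Your proposal follows essentially the same route as the paper's proof: expand the cyclic sum using $X_{\{\tau,\up\}}=[X_{\up},X_{\tau}]$ and $X_{\up}\lrcorner\sv=\d\up$, then collapse everything with Cartan's formula, the identity $\L_{X}(Y\lrcorner\al)=[X,Y]\lrcorner\al+Y\lrcorner\L_{X}\al$, and the vanishing of $\d\sv$ and $\d\d\up$, leaving only $\d(X_{\si}\lrcorner X_{\tau}\lrcorner\d\up)$. One small caution: the vector-field Jacobi identity is not what drives the cancellation (nor can it be, since the right-hand side is not cyclically symmetric in $\si,\tau,\up$ --- each cyclic term carries a different vector field in its ``outer'' slot, so the three terms never assemble into a single contraction of a Jacobiator); the work is done entirely by the Cartan-calculus identities you list, exactly as in the paper's term-by-term computation.
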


\begin{proof}
Let $\si,\tau,\up\in \Om^2_{coRoc}(M)$ with associated Rochesterian vector fields $X_{\si}$, $X_{\tau}$ and $X_{\up}$ respectively. Then we have the following:
\begin{equation*}
\begin{split}
\{\si&,\{\tau,\up\}\}+\{\tau,\{\up,\si\}\}+\{\up,\{\si,\tau\}\}=\{\si,\{\tau,\up\}\}-\{\tau,\{\si,\up\}\}-\{\{\si,\tau\},\up\}\\
&=X_{\si}\lrcorner X_{\{\tau,\up\}}\lrcorner\sv-X_{\tau}\lrcorner X_{\{\si,\up\}}\lrcorner\sv-X_{\{\si,\tau\}}\lrcorner X_{\up}\lrcorner\sv\\
&=X_{\si}\lrcorner\d\{\tau,\up\}-X_{\tau}\lrcorner\d\{\si,\up\}+[X_{\si},X_{\tau}]\lrcorner\d \up\\
&=X_{\si}\lrcorner\d(X_{\up}\lrcorner X_{\tau}\lrcorner\sv)-X_{\tau}\lrcorner\d(X_{\up}\lrcorner X_{\si}\lrcorner\sv)+[X_{\si},X_{\tau}]\lrcorner\d \up\\
&=-X_{\si}\lrcorner\d(X_{\tau}\lrcorner X_{\up}\lrcorner\sv)+X_{\tau}\lrcorner\d(X_{\si}\lrcorner X_{\up}\lrcorner\sv)+[X_{\si},X_{\tau}]\lrcorner\d \up\\
&=-X_{\si}\lrcorner\d(X_{\tau}\lrcorner\d \up)+X_{\tau}\lrcorner\d(X_{\si}\lrcorner\d \up)+[X_{\si},X_{\tau}]\lrcorner\d \up\\
&=-X_{\si}\lrcorner\d(X_{\tau}\lrcorner\d \up)+X_{\tau}\lrcorner\d(X_{\si}\lrcorner\d \up)+\L_{X_{\si}}(X_{\tau}\lrcorner\d \up)-X_{\tau}\lrcorner(\L_{X_{\si}}\d \up)\\
&=-X_{\si}\lrcorner\d(X_{\tau}\lrcorner\d \up)+X_{\tau}\lrcorner\d(X_{\si}\lrcorner\d \up)+X_{\si}\lrcorner\d(X_{\tau}\lrcorner\d \up)\\
&+\d(X_{\si}\lrcorner X_{\tau}\lrcorner\d \up)\underbrace{-X_{\tau}\lrcorner(X_{\si}\lrcorner\d\d \up)}_{=0}-X_{\tau}\lrcorner\d(X_{\si}\lrcorner\d \up)\\
&=X_{\si}\lrcorner\d(X_{\tau}\lrcorner\d \up)-X_{\si}\lrcorner\d(X_{\tau}\lrcorner\d \up)+X_{\tau}\lrcorner\d(X_{\si}\lrcorner\d \up)-X_{\tau}\lrcorner\d(X_{\si}\lrcorner\d \up)+\d(X_{\si}\lrcorner X_{\tau}\lrcorner\d \up)\\
&=\d(X_{\si}\lrcorner X_{\tau}\lrcorner\d \up)
\end{split}
\end{equation*}
\end{proof}

While we do not have a Lie algebra structure on $\Om^2_{coRoc}(M)$, we do, as noted above, still have a linear transformation $\Phi:\Om^2_{coRoc}(M)\to\X_{coRoc}(M)$. Assume that $\Phi(\si)=X_{\si}=0$, then $0=X_{\si}\lrcorner\sv=\d\si$ which implies that $\si$ is a closed $2$-form. Hence, coRochesterian vector fields are uniquely defined by their coRochesterian $2$-forms, up to the addition of a closed $2$-form.

\begin{thm}
\begin{enumerate}
    \item Given two coRochesterian $2$-forms $\si_1,\si_2\in\Om^2_{coRoc}(M)$, $\{\si_1,\si_2\}\in\ker\Phi$ if and only if $\d\si_1$ is constant along the flow lines of $X_{\si_2}$ if and only if $\d\si_2$ is constant along the flow lines of $X_{\si_1}$.
    \item Let $\psi:(M_1,\vp_1)\to(M_2,\vp_2)$ be a diffeomorphism. Then $\psi$ is a co$G_2$-morphism if and only if $\psi^*(\{\si,\tau\})=\{\psi^*\si,\psi^*\tau\}$ for all $\si,\tau\in\Om^2_{coRoc}(M_2)$.
\end{enumerate}
\end{thm}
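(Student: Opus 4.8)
The plan is to prove the two parts separately, as they involve rather different mechanisms. For part (1), I would start by unwinding the definition of $\ker\Phi$: we have $\{\si_1,\si_2\}\in\ker\Phi$ precisely when $\Phi(\{\si_1,\si_2\})=X_{\{\si_1,\si_2\}}=0$, which by the injectivity discussion means exactly that $\d(\{\si_1,\si_2\})=0$. Using the computation established just before the theorem, namely $\d(\{\si_1,\si_2\})=[X_{\si_2},X_{\si_1}]\lrcorner\sv$, and the formula from the Proposition that $[X_{\si_2},X_{\si_1}]\lrcorner\sv=\L_{X_{\si_2}}(X_{\si_1}\lrcorner\sv)=\L_{X_{\si_2}}(\d\si_1)$, I would identify the condition $\{\si_1,\si_2\}\in\ker\Phi$ with $\L_{X_{\si_2}}(\d\si_1)=0$. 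The phrase ``$\d\si_1$ is constant along the flow lines of $X_{\si_2}$'' is then just a restatement of $\L_{X_{\si_2}}(\d\si_1)=0$, giving the first equivalence directly.

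For the second equivalence in part (1), the key observation is antisymmetry. I would show $\L_{X_{\si_2}}(\d\si_1)=0$ if and only if $\L_{X_{\si_1}}(\d\si_2)=0$. The cleanest route is to note $\L_{X_{\si_2}}(\d\si_1)=[X_{\si_2},X_{\si_1}]\lrcorner\sv$ and $\L_{X_{\si_1}}(\d\si_2)=[X_{\si_1},X_{\si_2}]\lrcorner\sv=-[X_{\si_2},X_{\si_1}]\lrcorner\sv$; since $\sv$ is nondegenerate (so $\widetilde{\sv}$ is injective), one vanishes if and only if the other does. Thus all three conditions are equivalent, completing part (1).

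For part (2), I would argue both directions using naturality of the interior product and Hodge star under pullback by a diffeomorphism. First suppose $\psi$ is a co$G_2$-morphism, so $\psi^*(\stvt)=\stvo$. The essential lemma is that for a diffeomorphism, $\psi^*(X_{\si})$ is the coRochesterian vector field associated to $\psi^*\si$ with respect to $\stvo$: indeed $\psi^*(X_\si)\lrcorner\stvo=\psi^*(X_\si\lrcorner\stvt)=\psi^*(\d\si)=\d(\psi^*\si)$, using $\psi^*(\stvt)=\stvo$ and that pullback commutes with $\d$ and with interior product under the pushforward $\psi_*$. Granting this, I would compute $\psi^*(\{\si,\tau\})=\psi^*(\stvt(X_\si,X_\tau,\cdot,\cdot))=\stvo(\psi^*X_\si,\psi^*X_\tau,\cdot,\cdot)=\{\psi^*\si,\psi^*\tau\}$, which is the forward direction. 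For the converse, the delicate point is that $\psi^*(\{\si,\tau\})=\{\psi^*\si,\psi^*\tau\}$ is an identity of $2$-forms built from $\stvt$ on one side and $\stvo$ on the other; I would exploit the nondegeneracy of the $4$-forms together with the freedom to range over all $\si,\tau\in\Om^2_{coRoc}(M_2)$ to force $\psi^*(\stvt)=\stvo$ pointwise.

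The main obstacle I anticipate is the converse direction of part (2). The equality of brackets is a weaker-looking condition than equality of the $4$-forms, and extracting $\psi^*(\stvt)=\stvo$ from it requires showing that the coRochesterian vector fields $X_\si$ and their pairwise contractions into $\sv$ probe enough of the $4$-form to pin it down. One must verify that at each point the vectors $X_\si$ can be taken to span the tangent space (which follows since $\Om^3_7=\{X\lrcorner\sv\}$ and locally every tangent vector arises as some $X_\si$ via a suitably chosen local $2$-form with $\d\si\in\Om^3_7$), and then that knowing $\sv(X_\si,X_\tau,\cdot,\cdot)$ for all such pairs determines $\sv$; handling this spanning and recovery argument carefully, rather than the formal naturality manipulations, is where the real work lies.
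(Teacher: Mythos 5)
Your argument follows essentially the same route as the paper's: part (1) rests on the identity $\d\{\si_1,\si_2\}=[X_{\si_2},X_{\si_1}]\lrcorner\sv=\L_{X_{\si_2}}(\d\si_1)$ together with injectivity of $X\mapsto X\lrcorner\sv$, and part (2) on showing that a co$G_2$-morphism sends $X_{\si}$ to $X_{\psi^*\si}$ and, conversely, on extracting $\psi^*(\stvt)=\stvo$ from the bracket identity by letting $\si,\tau$ range over all coRochesterian $2$-forms. The only divergences are that your explicit antisymmetry derivation of the second equivalence in (1) fills in what the paper dismisses as ``follows similarly,'' and that the spanning issue you rightly flag in the converse of (2) --- one needs enough coRochesterian vector fields pointwise, which is not automatic since $X_{\si}\lrcorner\sv$ must be exact rather than merely lie in $\Om^3_7$ --- is equally present, and equally unaddressed, in the paper's own proof.
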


\begin{proof}
\begin{enumerate}
    \item We show only the first equivalence since the second equivalence follows similarly. From the definition of the bracket, we have $$\{\si_1,\si_2\}=\sv(X_{\si_1},X_{\si_2},\cdot,\cdot)=X_{\si_2}\lrcorner(X_{\si_1}\lrcorner\sv)$$ $$=X_{\si_2}\lrcorner\d\si_1=\L_{X_{\si_2}}\si_1-\d(X_{\si_2}\lrcorner\si_1).$$ From this, we see that $\d\{\si_1,\si_2\}=\d\L_{X_{\si_2}}\si_1=\L_{X_{\si_2}}(\d\si_1)$. Then $\{\si_1,\si_2\}\in\ker\Phi$ if and only if $\L_{X_{\si_2}}(\d\si_1)=0$.
    \item First, assume that $\psi$ is a co$G_2$-morphism, and note that for $p\in M_1$, we have the maps
    \begin{equation*}
    \begin{split}
    &\d\psi_p:T_pM_1\to T_{\psi(p)}M_2\\
    &\psi^*_p:T^*_{\psi(p)}M_2\to T^*_pM_1\\
    &\d\psi^{-1}_{\psi(p)}=(\d\psi_p)^{-1}:T_{\psi(p)}M_2\to T_pM_1
    \end{split}
    \end{equation*}
    Since $\psi$ is a co$G_2$-morphism, $\psi^*(\stvt)=\stvo$ and $(\psi^{-1})^*(\stvo)=\stvt$, so by definition, for $p\in M_1$, we then get the following equivalent equations
    \begin{equation*}
    \begin{split}
    (\stvo)_p(\cdot,\cdot,\cdot,\cdot)&=\psi^*_p((\stvt)_{\psi(p)})(\cdot,\cdot,\cdot,\cdot) =(\stvt)_{\psi(p)}(\d\psi_p\cdot,\d\psi_p\cdot,\d\psi_p\cdot,\d\psi_p\cdot)\\
    (\stvt)_{\psi(p)}(\cdot,\cdot,\cdot,\cdot)&=(\psi^{-1}_{\psi(p)})^*((\stvo)_p)(\cdot,\cdot,\cdot,\cdot) =(\stvo)_p(\d\psi^{-1}_{\psi(p)}\cdot,\d\psi^{-1}_{\psi(p)}\cdot,\d\psi^{-1}_{\psi(p)}\cdot,\d\psi^{-1}_{\psi(p)}\cdot)
    \end{split}
    \end{equation*}
    Thus, we calculate for a coRochesterian $2$-form $\si\in\Om^2_{coRoc}(M_2)$ and vector fields $Y,Z,W$ on $M_1$,
    \begin{equation*}
    \begin{split}
    (X_{\psi^*\si}&\lrcorner\stvo)_p(Y_p,Z_p,W_p)=\d(\psi^*\si)_p(Y_p,Z_p,W_p)\\
    &=\psi^*_p(\d\si_{\psi(p)})(Y_p,Z_p,W_p)\\
    &=\psi^*_p((X_{\si}\lrcorner\stvt)_{\psi(p)})(Y_p,Z_p,W_p)\\
    &=\psi^*_p((\stvt)_{\psi(p)}((X_{\si})_{\psi(p)},\cdot,\cdot,\cdot))(Y_p,Z_p,W_p)\\
    &=(\stvt)_{\psi(p)}((X_{\si})_{\psi(p)},\d\psi_pY_p,\d\psi_pZ_p,\d\psi_pW_p)\\
    &=(\stvo)_p(\d\psi^{-1}_{\psi(p)}(X_{\si})_{\psi(p)},\d\psi^{-1}_{\psi(p)}(\d\psi_pY_p),\d\psi^{-1}_{\psi(p)}(\d\psi_pZ_p), \d\psi^{-1}_{\psi(p)}(\d\psi_pW_p))\\ 
    &=(\stvo)_p(\d\psi^{-1}_{\psi(p)}(X_{\si})_{\psi(p)},Y_p,Z_p,W_p)
    \end{split}
    \end{equation*}
    that is, $(X_{\psi^*\si})_p=\d\psi^{-1}_{\psi(p)}(X_{\si})_{\psi(p)}$. Hence we find that
    \begin{equation*}
    \begin{split}
    (\psi^*\{\si,\tau\})_p(Y_p,Z_p)&=(\psi^*(\stvt(X_{\si},X_{\tau},\cdot)))_p(Y_p,Z_p)\\
    &=\psi^*_p((\stvt)_{\psi(p)}((X_{\si})_{\psi(p)},(X_{\tau})_{\psi(p)},\cdot))(Y_p,Z_p)\\
    &=(\stvt)_{\psi(p)}((X_{\si})_{\psi(p)},(X_{\tau})_{\psi(p)},\d\psi_pY_p,\d\psi_pZ_p)\\
    &=(\stvo)_p(\d\psi^{-1}_{\psi(p)}(X_{\si})_{\psi(p)},\d\psi^{-1}_{\psi(p)}(X_{\tau})_{\psi(p)},\d\psi^{-1}_{\psi(p)}(\d\psi_pY_p), \d\psi^{-1}_{\psi(p)}(\d\psi_pZ_p))\\ 
    &=(\stvo)_p((X_{\psi^*\si})_p,(X_{\psi^*\tau})_p,Y_p,Z_p)\\
    &=(\stvo)_p((X_{\psi^*\si})_p,(X_{\psi^*\tau})_p,\cdot,\cdot)(Y_p,Z_p)=\{\psi^*\si,\psi^*\tau\}_p(Y_p,Z_p)
    \end{split}
    \end{equation*}
    Conversely, assume that $\psi^*(\{\si,\tau\})=\{\psi^*\si,\psi^*\tau\}$ for all $\si,\tau\in\Om^2_{coRoc}(M_2)$. Then, for any $\si,\tau\in\Om^2_{coRoc}(M_2)$ we have
    \begin{equation*}
    \begin{split}
    \psi^*(\{\si,\tau\})&=\psi^*(\stvt(X_{\si},X_{\tau},\cdot,\cdot))=\psi^*(X_{\tau}\lrcorner X_{\si}\lrcorner\stvt)=\psi^*(X_{\tau}\lrcorner\d\si)\\
    &=\psi^*(\d\si(X_{\tau},\cdot,\cdot))=\d\si(X_{\tau},\d\psi\cdot,\d\psi\cdot)=\d\si(\d\psi(\d\psi^{-1}X_{\tau}),\d\psi\cdot,\d\psi\cdot)\\
    &=(\psi^*\d\si)(\d\psi^{-1}X_{\tau},\cdot,\cdot)=(\d\psi^{-1}X_{\tau})\lrcorner(\psi^*\d\si)
    \end{split}
    \end{equation*}
    and
    \begin{equation*}
    \{\psi^*\si,\psi^*\tau\}=\stvo(X_{\psi^*\si},X_{\psi^*\tau},\cdot,\cdot)=X_{\psi^*\tau}\lrcorner\d(\psi^*\si)=X_{\psi^*\tau}\lrcorner(\psi^*\d\si)
    \end{equation*}
    which, by our hypothesis, yields that $\d\psi^{-1}X_{\tau}=X_{\psi^*\tau}$ for any $\tau\in\Om^2_{coRoc}(M_2)$. Then for any $\si\in\Om^2_{coRoc}(M_2)$, any vector fields $Y,Z,W\in\X(M_1)$ and $p\in M_1$,
    \begin{equation*}
    \begin{split}
    (X_{\psi^*\si}\lrcorner\stvo)_p(Y_p,Z_p,W_p)&=\d(\psi^*\si)_p(Y_p,Z_p,W_p)=(\psi^*\d\si)_p(Y_p,Z_p,W_p)\\
    &=\psi^*_p(X_{\si}\lrcorner\stvt)_p(Y_p,Z_p,W_p)\\
    &=(\stvt)_{\psi(p)}((X_{\si})_{\psi(p)},\d\psi_pY_p,\d\psi_pZ_p,\d\psi_pW_p)\\
    &=(\stvt)_{\psi(p)}(\d\psi_p(\d\psi^{-1}_{\psi(p)}(X_{\si})_{\psi(p)}),\d\psi_pY_p,\d\psi_pZ_p,\d\psi_pW_p)\\
    &=(\psi^*(\stvt))_{\psi(p)}(\d\psi^{-1}_{\psi(p)}(X_{\si})_{\psi(p)},Y_p,Z_p,W_p)\\
    &=(\psi^*(\stvt))_{\psi(p)}((X_{\psi^*\si})_p,Y_p,Z_p,W_p)
    \end{split}
    \end{equation*}
    \noindent
    Thus $X_{\psi^*\si}\lrcorner\stvo=X_{\psi^*\si}\lrcorner\psi^*(\stvt)$ which implies that $\stvo=\psi^*(\stvt)$ as desired.
\end{enumerate}
\end{proof}

\end{document}